\newtheorem{thm}{Theorem}
\newtheorem{lem}{Lemma}
\newtheorem{prop}{Proposition}
\theoremstyle{definition}
\newcommand{\ensembles}[1]{\mathbb{#1}}
	\newcommand{\Z}{\ensembles{Z}}
	\newcommand{\R}{\ensembles{R}}
\newcommand{\ind}[1]{\mathbf{1}_{\{#1\}}}
	\renewcommand{\P}{\ensembles{P}}
	\newcommand{\E}{\ensembles{E}}
\renewcommand{\Pr}[1]{\P\left(#1\right)}
\newcommand{\Es}[1]{\E\left[#1\right]}
\renewcommand{\d}{\mathrm{d}}
\newcommand{\ex}{\mathrm{e}}
\newcommand{\q}{\mathbf{q}}
\newcommand{\e}{\overline{\mathfrak{e}}}
\renewcommand{\root}{\mathfrak{R}}
\newcommand{\dgr}{d_{\mathrm{gr}}}
\newcommand{\hBall}{\overline{\mathrm{Ball}}}
        \newcommand{\map}{\mathfrak{m}}
\newcommand{\Map}{\mathfrak{M}_\infty}
\newcommand{\Alg}{\mathcal{A}}
\newcommand{\cv}[1][n]{\enskip\xrightarrow[#1 \to \infty]{}\enskip}
\newcommand{\cvdist}[1][n]{\enskip\xrightarrow[#1 \to \infty]{(d)}\enskip}
\DeclareSymbolFont{extraup}{U}{zavm}{m}{n}
\DeclareMathSymbol{\vardspade}{\mathalpha}{extraup}{81}
\DeclareMathSymbol{\varheart}{\mathalpha}{extraup}{86}
\DeclareMathSymbol{\vardiamond}{\mathalpha}{extraup}{87}
\DeclareMathSymbol{\varclub}{\mathalpha}{extraup}{84}
\renewcommand*{\@fnsymbol}[1]{\ensuremath{\ifcase#1\or  \vardspade \or \varclub \or \vardiamond \or \varheart \or
   \mathsection\or \mathparagraph\or \|\or **\or \dagger\dagger
   \or \ddagger\ddagger \else\@ctrerr\fi}}
\author{
	Nicolas \textsc{Curien}\thanks{D\'epartement de Math\'ematiques, Univ. Paris-Sud, Universit\'e Paris-Saclay and IUF.\hfill  \href{mailto:nicolas.curien@gmail.com}{\texttt{nicolas.curien@gmail.com}}} 
\qquad\&\qquad
	Cyril \textsc{Marzouk}\thanks{D\'epartement de Math\'ematiques, Univ. Paris-Sud, Universit\'e Paris-Saclay and FMJH.\hfill  \href{mailto:cyril.marzouk@u-psud.fr}{\texttt{cyril.marzouk@u-psud.fr}}}
}
\title{How fast planar maps get swallowed by a peeling process}
\begin{document}

\maketitle

\begin{abstract}
The peeling process is an algorithmic procedure that discovers a random planar map step by step. In generic cases such as the UIPT or the UIPQ, it is known~\cite{Curien-Le_Gall:Scaling_limits_for_the_peeling_process_on_random_maps} that any peeling process will eventually discover the whole map. In this paper we study the probability that the origin is not swallowed by the peeling process until time $n$ and show it decays at least as $n^{-2c/3}$ where 
\[c \approx 0.1283123514178324542367448657387285493314266204833984375...\]
is defined via an integral equation derived using the Lamperti representation of the spectrally negative $3/2$-stable Lévy process conditioned to remain positive~\cite{Chaumont-Kyprianou-Pardo:Some_explicit_identities_associated_with_positive_self_similar_Markov_processes} which appears as a scaling limit for the perimeter process. As an application we sharpen the upper bound of the sub-diffusivity exponent for random walk of~\cite{Benjamini-Curien:Simple_random_walk_on_the_uniform_infinite_planar_quadrangulation_subdiffusivity_via_pioneer_points}.
\end{abstract}

\section{Introduction}

One of the main tool to study random maps is the so-called \emph{peeling procedure} which is a step-by-step Markovian exploration of the map. Such a procedure was conceived by Watabiki~\cite{Watabiki:Construction_of_non_critical_string_field_theory_by_transfer_matrix_formalism_in_dynamical_triangulation} and then formalized and used in the setting of the Uniform Infinite Planar Triangulation (UIPT in short) by Angel~\cite{Angel:Growth_and_percolation_on_the_UIPT}. In this paper we will use Budd's peeling process~\cite{Budd:The_peeling_process_of_infinite_Boltzmann_planar_maps} which enables to treat many different models in a unified way. A peeling of a rooted map $\map$ is an increasing sequence of sub-maps with holes
\[\mathfrak{e}_0 \subset \mathfrak{e}_1 \subset \dots \subset \map,\]
where $\mathfrak{e}_0$ is the root-edge of $\map$, and where $\mathfrak{e}_{i+1}$ is obtained from $\mathfrak{e}_i$ by selecting an edge (the edge to peel) on the boundary of one of its holes and then either gluing one face to it, or identifying it to another edge of the same hole; in the latter case this may split the hole into two new holes or make it disappear. When the map $  \mathfrak{m} = \map_{\infty}$ is infinite and one-ended (a map of the plane) we usually ``fill-in'' the finite holes of $\mathfrak{e}_i$ and consider instead a sequence of sub-maps with one hole
\[\e_0 \subset \e_1 \subset \dots \subset \map_\infty\]
called a \emph{filled-in} peeling process. Obviously, the peeling process depends on the algorithm used to choose the next edge to peel and any algorithm is allowed, provided it does not use the information outside $ \e_{n}$ (it is `Markovian') see~\cite{Curien:Peccot} for details. 

In this work we will study the peeling process on random infinite critical Boltzmann planar maps with bounded face-degrees. More precisely, if $\q = (q_i)_{i \ge 1}$ is a non-negative sequence with finite support we define the $\q$-Boltzmann weight of a finite map $\map$ as
\[w(\map) \enskip=\enskip \prod_{f\in \mathsf{Faces}(\map)} q_{\deg(f)}.\]
We suppose that $\q$ is \emph{admissible} (the above measure has finite total mass) and \emph{critical} in the sense that we cannot increase any of the $q_{i}$'s and remain admissible, see~\cite{Budd:The_peeling_process_of_infinite_Boltzmann_planar_maps, Curien:Peccot,Marckert-Miermont:Invariance_principles_for_random_bipartite_planar_maps}. Under these conditions, it was proved by Stephenson~\cite{Stephenson:Local_convergence_of_large_critical_multi_type_Galton_Watson_trees_and_applications_to_random_maps} (see also~\cite{Bjornberg-Stefansson:Recurrence_of_bipartite_planar_maps} in the case of bipartite maps, when all faces have even degrees) that such a Boltzmann map conditioned to be large converges in distribution for the \emph{local topology} to a random infinite map of the plane $\Map$ which is called the $\q$-infinite Boltzmann map. This result encompasses previous ones due to Angel \& Schramm~\cite{Angel-Schramm:UIPT} and Krikun~\cite{Krikun-Local_structure_of_random_quadrangulations} on uniformly random triangulations and quadrangulations, in which case $\Map$ is the UIPT and the UIPQ respectively.

\begin{figure}[!ht]\centering
\includegraphics[scale=.6, page=1]{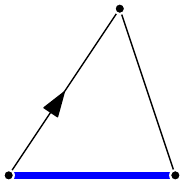}
\includegraphics[scale=.6, page=2]{peeling}
\includegraphics[scale=.6, page=3]{peeling}
\includegraphics[scale=.6, page=4]{peeling}
\includegraphics[scale=.6, page=5]{peeling}
\includegraphics[scale=.6, page=6]{peeling}
\caption{Example of the first few steps of a (filled-in) peeling process; the next edge to peel is indicated in fat blue, and when this edge is identified with another one, the latter is indicated in dashed blue. The filled-in holes are indicated in pink (we do not represent the submap they contain). At the last step, the root-edge gets swallowed. }
\label{fig:peeling}
\end{figure}

Let $( \e_{i} ; i \geq 0)$ be a (filled-in Markovian) peeling of $ \mathfrak{M}_{\infty}$. The proof of~\cite[Theorem 1]{Benjamini-Curien:Simple_random_walk_on_the_uniform_infinite_planar_quadrangulation_subdiffusivity_via_pioneer_points} recast below into Proposition~\ref{prop:hauteur_peeling}  shows that (at least in the case of the UIPQ) such an exploration cannot escape too fast to infinity in the sense that whatever the peeling algorithm used $\overline{\mathfrak{e}}_{n}$ cannot reach distances more than $n^{1/3+o(1)}$ from the origin inside  $ \mathfrak{M}_{\infty}$. On the other hand, it is known (see~\cite[Corollary 27]{Curien:Peccot} based on~\cite{Curien-Le_Gall:Scaling_limits_for_the_peeling_process_on_random_maps}) that the exploration will eventually reveal the whole map, that is
\[\bigcup_{n \ge 0} \e_n = \Map, \quad  \text{a.s.}\]
Our main result gives a quantitative version of the last display. We let $\partial \e_n$ denote the boundary of the sub-map $\e_n$, defined as the edges adjacent to its unique hole.

\begin{thm}[The peeling swallows the root]
\label{thm:peeling_avale_racine}
Let $\Map$ be an infinite Boltzmann planar map with bounded face-degrees and let $(\e_n)_{n \ge 0}$ be any (filled-in Markovian) peeling process of $\Map$. 
If $\root$ denotes the root-edge of $\Map$ then we have as $n \to \infty$:
\[\P(\root \in \partial \e_n) \le n^{-2c/3 + o(1)},\]
where $c \approx 0.1283123514178324542367448657387285493314266204833984375...$ is the positive solution to 
\[\frac{4\pi}{3} = \int_{0}^{1} x^{c-1}(1-x)^{1/2} \d x \cdot \int_{0}^{1/2} x^{c+1/2}(1-x)^{-5/2} \d x.\]
\end{thm}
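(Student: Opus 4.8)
The plan is to track the perimeter process $(|\partial \e_n|)_{n \ge 0}$ of the filled-in peeling and to control, from below, the number of peeling steps that actually touch the root-edge. Recall that in Budd's peeling of a $\q$-Boltzmann map the perimeter process is a Markov chain whose law does not depend on the peeling algorithm, and whose increments have a heavy left tail: with positive probability at each step one identifies the peeled edge with another boundary edge, swallowing a macroscopic fraction of the current hole. The key structural input, established in~\cite{Budd:The_peeling_process_of_infinite_Boltzmann_planar_maps, Curien-Le_Gall:Scaling_limits_for_the_peeling_process_on_random_maps}, is that after the natural time-rescaling the perimeter process converges to the spectrally negative $3/2$-stable Lévy process conditioned to stay positive, $\Caupo$; via the Lamperti transform this is a $3/2$-self-similar positive Markov process, and the identities of~\cite{Chaumont-Kyprianou-Pardo:Some_explicit_identities_associated_with_positive_self_similar_Markov_processes} give exact expressions for the law of its overall jump structure. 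I would first recast the event $\{\root \in \partial\e_n\}$ purely in terms of the perimeter process: the root is still on the boundary at time $n$ iff none of the first $n$ peeling steps has `cut across' the arc of the boundary separating the two endpoints of $\root$; each such step independently (given the perimeter) does so with probability comparable to (size of the swallowed arc)$/$(current perimeter).

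Second, I would make this quantitative using the scaling limit. Writing $P_n = |\partial \e_n|$ and using that $P_n$ is of order $n^{2/3}$ on the relevant time scale (more precisely $n^{-2/3}P_{\lfloor nt\rfloor} \to \Caupo_t$), the probability that a given downward jump of relative size $x$ swallows the root-edge is $\asymp x$, and these decisions are conditionally independent across steps. Hence
\[
\P(\root \in \partial\e_n) \;=\; \Es{\prod_{i=1}^{n}\bigl(1 - (\text{relative swallowed size at step } i)\bigr)} \;\approx\; \Es{\exp\!\Bigl(-\sum_{i \le n} \tfrac{\Delta^-_i}{P_{i-1}}\Bigr)},
\]
where $\Delta^-_i$ is the (nonnegative) downward part of the $i$-th perimeter increment. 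The sum in the exponent is, after rescaling, an additive functional of $\Caupo$ of the form $\int_0^1 \d\sigma^-_s/\Caupo_s$ where $\sigma^-$ is the (pure-jump) decreasing part of $-\Caupo$; by the Lamperti representation this equals an exponential functional of the underlying Lévy process, and its law — in particular the left tail $\P(\text{functional} \le \varepsilon)$ — is governed by the Laplace/Mellin identities of~\cite{Chaumont-Kyprianou-Pardo:Some_explicit_identities_associated_with_positive_self_similar_Markov_processes}. The constant $c$ should emerge precisely as the exponent such that $\Es{(\Caupo_1)^{-\alpha}}$ or the relevant exponential-functional moment blows up, which is where the two Beta-type integrals $\int_0^1 x^{c-1}(1-x)^{1/2}\,\d x$ (the $\mathrm{Beta}(c,3/2)$ coming from the stable subordinator of positive jumps / the entrance law) and $\int_0^{1/2} x^{c+1/2}(1-x)^{-5/2}\,\d x$ (coming from the jump measure of the conditioned process restricted to jumps that do not overshoot) enter, their product being normalised by $4\pi/3$.

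Third, the upper bound. One direction is clean: I would show $\P(\root\in\partial\e_n)\le n^{-2c/3+o(1)}$ by proving that the rescaled additive functional has an exponential moment of the right order, i.e. that $\Es{\exp(\lambda \int_0^1 \d\sigma^-_s/\Caupo_s)}<\infty$ exactly when $\lambda< \lambda_c$, with $\lambda_c$ tied to $c$ through the displayed integral equation; a standard discretisation-plus-tightness argument transfers the scaling-limit estimate back to the discrete chain, and a union bound over dyadic time-scales removes the $o(1)$. Concretely, conditioning on the perimeter staying of order $n^{2/3}$ on a positive fraction of $[0,n]$ forces the accumulated relative swallowing to be at least of order $c\log n$ with overwhelming probability, yielding the bound. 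The main obstacle I anticipate is twofold: (a) justifying the conditional-independence / Poissonian description of which boundary arc each peeling step swallows — one must be careful that the algorithm is Markovian and that the `position' of the root on the boundary is, given the perimeter process, uniform (or uniform-like) on the current hole, which requires a symmetry or re-rooting argument; and (b) extracting the precise constant $c$ from~\cite{Chaumont-Kyprianou-Pardo:Some_explicit_identities_associated_with_positive_self_similar_Markov_processes}, i.e.\ matching the integral equation to the explicit Mellin transform of the exponential functional of the Lamperti-transformed $3/2$-stable process conditioned to stay positive — this is the computation that produces the two Beta integrals and the $4\pi/3$, and getting the domains of integration ($[0,1]$ versus $[0,1/2]$) right is the delicate point, reflecting the distinction between jumps of the spectrally negative Lévy process and jumps of its conditioned-to-stay-positive version that actually reach the root.
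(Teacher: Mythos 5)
Your central formula is the problem. You write
\[
\P(\root \in \partial\e_n) \;\approx\; \Es{\exp\Bigl(-\sum_{i \le n} \Delta^-_i/P_{i-1}\Bigr)},
\]
on the grounds that a downward jump of relative size $x$ swallows the root with probability $\asymp x$. This is not true pathwise, and it is not true on average either: given the configuration, the swallowed bubble is the left or the right candidate arc with probability $1/2$ each, so the conditional probability that the root survives a given step is exactly $\frac12(\ind{\root\notin L}+\ind{\root\notin R})\in\{0,\tfrac12,1\}$, and which value occurs depends on where the root sits on $\partial\e_k$ relative to the peel edge --- a quantity that is algorithm-dependent and emphatically not uniform (for the worst algorithm, which always peels opposite the root, a bubble of relative size $<1/2$ \emph{never} contains the root). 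You flag the needed ``uniformity of the root's position'' as an obstacle, but it is not a technical obstacle to be overcome: it is false, and your formula gives a quantitatively wrong answer. Indeed $\E[\Delta^-_i]$ is a positive constant (the tail $\nu(-k)\sim C k^{-5/2}$ is integrable against $k$) and $P_{i-1}\asymp i^{2/3}$, so $\sum_{i\le n}\Delta^-_i/P_{i-1}\asymp n^{1/3}$ and your right-hand side is $\exp(-\Theta(n^{1/3}))$ --- a stretched-exponential bound that contradicts the polynomial decay which the paper shows is attained by the worst algorithm. Relatedly, the exponential functional $\int_0^1\d\sigma^-_s/\Caupo_s$ and its Mellin transform are not where $c$ comes from.

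The paper's proof circumvents the position of the root entirely by keeping only the jumps that swallow \emph{more than half} the current perimeter: for such a jump the left and right candidate bubbles together cover all of $\partial\e_k$, so whichever edge is the root, the fresh, independent, fair left/right choice swallows it with probability at least $1/2$. This gives the clean pathwise bound $\P(\root\in\partial\e_n)\le\Es{2^{-\#\{k<n:\,\Delta P_k<-P_k/2\}}}$. One then shows $\sup_{k\le n}P_k\ge n^{2/3-\varepsilon}$ up to an error decaying faster than any polynomial, decomposes into perimeter-doubling scales via the passage times $\tau_i=\inf\{k: P_k\ge 2^i\}$ and the strong Markov property, and identifies the per-scale factor, by the invariance principle, as the scale-invariant quantity $\E_1[2^{-\mathcal N}]=\ex^{-c\ln 2}$, where $\mathcal N$ counts the jumps of $\Caupo$ of relative size $>1/2$ before the process doubles. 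The product of $\log_2(n^{2/3-\varepsilon})$ such factors gives $n^{-2c/3+o(1)}$. Finally $c$ is extracted not from an exponential functional but from an exponential-martingale identity for the Lamperti L\'evy process $\xi$ (with $\Caupo=\exp(\xi_{\tau_\cdot})$): the equation $\psi(c)=\int(1-\ex^{F(y)})\ex^{cy}\Pi(\d y)$ with $F(y)=-\ln 2\cdot\ind{y<-\ln 2}$, the explicit L\'evy measure $\Pi(\d y)=\ex^{3y/2}(1-\ex^y)^{-5/2}\ind{y<0}\d y$ of Caballero--Chaumont and the Laplace exponent of Chaumont--Kyprianou--Pardo. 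In particular the first Beta integral is just $B(c,3/2)$ from $\psi(c)$, and the domain $[0,1/2]$ of the second is the image of $\{y<-\ln 2\}$ under $x=\ex^y$, i.e.\ it encodes precisely the ``relative jump bigger than one half'' threshold --- not an overshoot condition or an entrance law.
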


Of course, a given peeling process may swallow the root-edge much faster, but our upper bound holds for any  peeling algorithm, and it is sharp for the worst algorithm, in which we always select the edge which is at the opposite of the root-edge on $\partial \e_{n}$. Theorem~\ref{thm:peeling_avale_racine} holds with the same proof  for  Angel's peeling process on the UIPT or UIPQ. The intriguing constant $c \approx 0.1283...$  comes from a calculation done on the L\'evy process arising in the Lamperti representation of the $3/2$-stable spectrally negative L\'evy process conditioned to stay positive. Theorem~\ref{thm:peeling_avale_racine} should hold true for any critical generic Boltzmann map. Let us note that the proof of Theorem~\ref{thm:peeling_avale_racine} actually shows that for any sub-map $\e$ with a unique hole and any edge $E \in \partial \e$, if $\ell$ denotes the length of the hole, then if we start the peeling process with $\e_0 = \e$, we have
\[\P(E \in \partial \e_n) \le (n/\ell)^{-2c/3 + o(1)}.\]

\paragraph{Application.}
A particular peeling algorithm was designed in~\cite{Benjamini-Curien:Simple_random_walk_on_the_uniform_infinite_planar_quadrangulation_subdiffusivity_via_pioneer_points} to study the \emph{simple random walk} on the UIPQ and show it is sub-diffusive with exponent $1/3$: the maximum displacement of the walk after $n$ steps is at most of order $n^{1/3+o(1)}$. The proof in~\cite{Benjamini-Curien:Simple_random_walk_on_the_uniform_infinite_planar_quadrangulation_subdiffusivity_via_pioneer_points} actually does not use much of the random walk properties since it is valid for any peeling algorithm (see Proposition~\ref{prop:hauteur_peeling} below). It shows in fact that the maximal displacement of the walk until the discovery of the $n$th \emph{pioneer point} is $n^{1/3+o(1)}$. To improve it we give a bound on the number of pioneer points. For convenience we state the result for the walk on the dual of the UIPQ which is more adapted to Budd's peeling process but the proof could  likely be adapted to the walk on the dual or primal lattice of any generic infinite Boltzmann map.  A \emph{pioneer edge} is roughly speaking a dual edge crossed by the walk, such that its target is not disconnected from infinity by the past trajectory; we refer to Section~\ref{sec:sous_diff} and Figure~\ref{fig:marche_dual} for a formal definition.

\begin{prop}  \label{cor:sous-diff} Let $( \vec{E}_n)_{n \ge 0}$ be the oriented dual edges visited  by a simple random walk on the faces of the UIPQ started from the face adjacent to the right of the root-edge. There exists $\gamma >0$ such that if  $\pi_{n}$ is the number of pioneer edges of the walk up to time $n$ then we have 
\[\Es{\pi_{n}} = O(n^{1- \gamma}).\]
We deduce that
\[\left(n^{-(1-\gamma)/3}\max_{0 \le k \le n} \dgr( \vec{E}_0, \vec{E}_n)\right)_{n \ge 0} \qquad \text{is tight.}\]
\end{prop}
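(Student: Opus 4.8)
The plan is to run a peeling-along-the-walk procedure, exactly as in~\cite{Benjamini-Curien:Simple_random_walk_on_the_uniform_infinite_planar_quadrangulation_subdiffusivity_via_pioneer_points}, so that after the walk has produced $k$ pioneer edges the explored region $\e_{?}$ is a submap with one hole whose boundary contains the current position of the walk, and such that the number of peeling steps performed is controlled by the number of pioneer edges and the perimeters encountered. The first step is therefore to recall (or re-derive) that the maximal displacement after $n$ walk steps is at most $\pi_n^{1/3 + o(1)}$: this is Proposition~\ref{prop:hauteur_peeling} applied to the peeling algorithm coupled with the walk, since each pioneer edge corresponds to at least one peeling step and the height growth of a filled-in peeling is $(\text{number of steps})^{1/3+o(1)}$. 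Granting the bound $\Es{\pi_n} = O(n^{1-\gamma})$, the tightness statement then follows by Markov's inequality: $\Pr{\max_{k\le n}\dgr(\vec E_0,\vec E_n) \ge \lambda n^{(1-\gamma)/3}}$ is, up to the $o(1)$ in the exponent, at most $\Pr{\pi_n \ge \lambda^{3-o(1)} n^{1-\gamma}} \le \lambda^{-3+o(1)} \Es{\pi_n}/n^{1-\gamma} = O(\lambda^{-3+o(1)})$, which goes to $0$ as $\lambda\to\infty$ uniformly in $n$.

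The heart of the matter is the bound $\Es{\pi_n} = O(n^{1-\gamma})$. Here I would use Theorem~\ref{thm:peeling_avale_racine}, or rather the sharpened local version stated right after it: if one starts a peeling from a submap $\e$ with a hole of length $\ell$ and marks an edge $E\in\partial\e$, then $\Pr{E\in\partial\e_m} \le (m/\ell)^{-2c/3+o(1)}$. The idea is that a given dual edge $\vec E$ crossed by the walk at some time can only remain a pioneer edge as long as its target vertex/face stays on the boundary of the explored region; once it is swallowed, it is counted once and never again. So I would sum over the edges crossed: a crossed edge contributes to $\pi_n$ only if, at every later time up to $n$, it has not been swallowed, and the probability of this is bounded using the above decay. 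The subtle point is that the perimeter $\ell$ of the hole at the time a given edge is first crossed is itself random, and the relevant ``peeling time'' elapsed is not simply $n$ minus the crossing time but rather the number of peeling steps, which is itself random. I would handle this by splitting according to whether the perimeter stays in a moderate range (polylog, or $n^{\varepsilon}$) — on this event the decay estimate bites and gives, after summing over the $O(n)$ crossed edges, a total of order $n \cdot n^{-\delta}$ for some $\delta>0$ depending on $c$ — and controlling the complementary event (large perimeter ever, or anomalously few peeling steps) by the known fluctuation estimates for the perimeter process of Boltzmann maps (it has $3/2$-stable fluctuations, so perimeters of order $n^{2/3+o(1)}$ are typical but larger deviations are polynomially rare), absorbing the loss into the exponent.

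I expect the main obstacle to be precisely this interplay between the two clocks: the walk clock $n$ and the peeling-step clock, together with the randomness of the perimeter. One must make sure that ``most'' of the $n$ first steps of the walk correspond to a healthy number of peeling steps and to holes of not-too-large perimeter, so that Theorem~\ref{thm:peeling_avale_racine} can be applied with a gain. A clean way to organise this is to fix a scale, say consider only steps after which the hole perimeter is at most $n^{\alpha}$ for a well-chosen $\alpha$, bound the number of ``bad'' steps (large perimeter) by a direct first-moment computation on the perimeter process, and on the good steps run the swallowing estimate with $\ell \le n^{\alpha}$ and elapsed peeling time $\gtrsim (n - \text{crossing time})^{1-o(1)}$; optimising $\alpha$ against $c$ yields the value of $\gamma$. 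Once $\Es{\pi_n}=O(n^{1-\gamma})$ is in hand, plugging into the displacement bound $\max_{k\le n}\dgr \le \pi_n^{1/3+o(1)}$ and applying Markov as above closes the proof.
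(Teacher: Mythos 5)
There is a genuine gap in the central step, the bound $\E[\pi_n]=O(n^{1-\gamma})$. Your plan is to sum over the crossed edges and bound, for each one, the probability that it ``has not been swallowed'' using Theorem~\ref{thm:peeling_avale_racine}. But this does not match the definition of a pioneer edge: whether $\vec E_n$ is pioneer is decided at the moment of crossing by the \emph{past} trajectory (is its target still attached to infinity given what has been explored so far?), not by its future survival; and, more importantly, Theorem~\ref{thm:peeling_avale_racine} controls a marked edge fixed at time $0$ of the peeling, whereas here the edge in question is the one currently at the tip of the walk. The missing idea is the reversibility trick of~\cite[Lemma 12]{Curien:PSHT}: since the UIPQ is a stationary and reversible random graph, reversing the first $n$ steps of the walk converts the event $\{\vec E_n \text{ is pioneer}\}$ into the event $\{\root \in \partial\e_{\pi_n}\}$ for the peeling driven by the reversed walk, and only then does Theorem~\ref{thm:peeling_avale_racine} apply. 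Without this identity your ``sum over crossed edges'' has no estimate to plug in.

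The second gap is in how you handle the mismatch between the walk clock and the peeling clock. You propose to control ``anomalously few peeling steps'' by fluctuation estimates for the perimeter process, but the perimeter process says nothing about how many walk steps elapse per peeling step: that is governed by how long the walk lingers inside the already-explored region. The paper writes $\P(\root\in\partial\e_{\pi_n}) \le \P(\pi_n\le n^{\alpha}) + \P(\root\in\partial\e_{n^{\alpha}})$ and controls the first term by noting that $\pi_n\le n^{\alpha}$ forces the walk to be confined in $\hBall(\Map,4n^{\alpha})$ for $n$ steps; the Jonasson--Schramm cover-time bound gives $\P(\text{confined in }G\text{ for }n\text{ steps})\le \exp(-Cn/|G|^2)$, and combining this with $\E[|\hBall(\Map,r)|]\lesssim r^{4}$ and Markov's inequality yields $n^{4\alpha-1/2+o(1)}$, after which one optimises $\alpha$ against $2c\alpha/3$. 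Your splitting on the perimeter being at most $n^{\alpha}$ does not substitute for this. The final part of your argument (displacement $\lesssim \pi_n^{1/3}$ via Proposition~\ref{prop:hauteur_peeling}, then Markov) is correct and is what the paper does.
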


The basic idea is to use reversibility to convert the discovery of a pioneer edge at time $n$ into the event in which the root-edge is still exposed after $n$ steps of the walk as in~\cite[Lemma 12]{Curien:PSHT} and then use Theorem~\ref{thm:peeling_avale_racine}.  Our proof gives a rather low value $\gamma = c/(12+2c) \approx 0.01046881621...$ with $c$ from Theorem~\ref{thm:peeling_avale_racine}; it could optimised, but it cannot exceed 
$\gamma 
= 2c/(3+2c)
\approx 0.07880082179...$ 
and this would require a longer argument to yield a sub-diffusivity exponent slightly larger than $0.3$, which is still far from the conjectural value of $1/4$ for the simple random walk on generic random planar maps. We therefore restricted ourselves to this weaker exponent which still improves on the very general upper bound on sub-diffusivity exponent for unimodular random graphs discovered by Lee~\cite[Theorem 1.9]{Lee:Conformal_growth_rates_and_spectral_geometry_on_distributional_limits_of_graphs}. In the case of the UIPT (type II) Gwynne \& Miller~\cite[Theorem 1.8]{Gwynne-Miller:Random_walk_on_random_planar_maps_spectral_dimension_resistance_and_displacement} have proven that the exponent is at least $1/4$ and an ongoing work of Gwynne \& Hutchcroft~\cite{Gwynne-Hutchcroft:preparation} shall yield the corresponding upper bound (still in the case of the UIPT). We do not think that our methods can lead to this optimal exponent.

The results of this paper can also be adapted to the case of peeling processes (and random walks on the dual and primal) on Boltzmann maps with large faces~\cite{Curien-Marzouk:sous_diff}.

\paragraph{Acknowledgement.}
We thank the anonymous referee for a swift report. We acknowledge the support of the grants \texttt{ANR-15-CE40-0013} `Liouville' and \texttt{ANR-14-CE25-0014} `GRAAL', the grant ERC `GeoBrown', the Institut Universitaire de France and of the Fondation Math\'ematique Jacques Hadamard.

\section{Peeling estimates}

The proof of Theorem~\ref{thm:peeling_avale_racine} relies on the \emph{perimeter process} associated with a peeling process. We first briefly recall from~\cite{Budd:The_peeling_process_of_infinite_Boltzmann_planar_maps} the law of this process,  a random walk conditioned to stay positive which converges once suitably rescaled towards a $3/2$-stable spectrally negative Lévy process conditioned to stay positive; the quantities that we will consider do not depend on scaling constants so we shall not need to know the precise characteristic exponent of this Lévy process and we shall refer to it as \emph{the} $3/2$-stable spectrally negative Lévy process. Throughout this section, $\Map$ denotes a random $\q$-infinite Boltzmann map, where $\q$ is an admissible and critical weight sequence with finite support.

\subsection{Peeling of a Boltzmann planar map}
\label{sec:peeling}

Let us give a brief description of the peeling process of Budd~\cite{Budd:The_peeling_process_of_infinite_Boltzmann_planar_maps}, with a point of view inspired by~\cite{Budd-Curien:Geometry_of_infinite_planar_maps_with_high_degrees,Curien:Peccot} to which we refer for more details. Fix an infinite one-ended map $\map_\infty$; a \emph{sub-map} $\e$ of $\map_\infty$ is a map with a distinguished simple face called the \emph{hole}, such that one can recover $\map_\infty$ by gluing inside the unique hole of $\e$ a planar map with a (not necessarily simple) boundary of perimeter matching that of the hole of $\e$ (this map is then uniquely defined). Then a \emph{(filled-in) peeling exploration} of $\map_\infty$ is an increasing sequence $(\e_i)_{i \ge 0}$ of sub-maps of $\map_\infty$ containing the root-edge. Such an exploration depends on an algorithm $\Alg$ which associates with each sub-map $\e$ an edge on the boundary of its hole; this algorithm can be deterministic or random, but in the latter case the randomness involved must be independent of the rest of the map. Then the peeling exploration of $\map_\infty$ with algorithm $\Alg$ is the sequence $(\e_i)_{i \ge 0}$ of sub-maps of $\map_\infty$ constructed recursively as follows. First $\e_0$ always only consists of two simple faces of degree $2$ and an oriented edge, the hole is the face on the left of this root-edge; this corresponds to the root-edge of the map $\map_\infty$ that we open up in two. Then for each $i \ge 0$, conditional on $\e_i$, we choose the edge $\Alg(\e_i)$ on the boundary of its hole, and we face two possibilities depicted in Figure~\ref{fig:peeling_dual}:
\begin{enumerate}
\item\label{item:def_peeling_new_face} either the face in $\map_\infty$ on the other side of $\Alg(\e_i)$ is not already present in $\e_i$; in this case $\e_{i+1}$ is obtained by adding this face to $\e_i$ glued onto $\Alg(\e_i)$ and without performing any other identification of edges,
\item\label{item:def_peeling_gluing} or the other side of $\Alg(\e_i)$ in $\map_\infty$ actually corresponds to a face already discovered in $\e_i$. In this case $\e_{i+1}$ is obtained by performing the identification of the two edges in the hole of $\e_i$. This usually creates two holes, but since $\map_\infty$ is one-end, we decide to fill-in the one containing a finite part of $\map_\infty$. 
\end{enumerate}

\begin{figure}[!ht]
\begin{center}
\includegraphics[height=12\baselineskip]{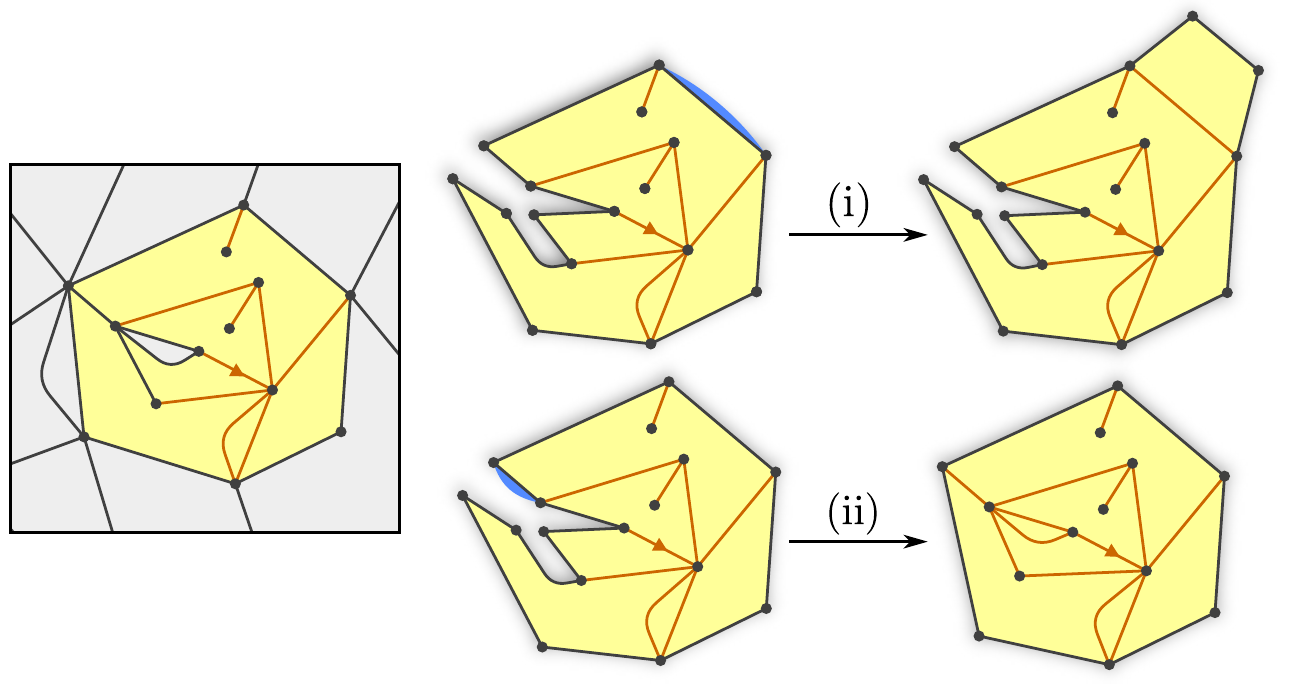}
\caption{Illustration of a (filled-in) peeling step in a one-ended bipartite map. The peeling edge is depicted in blue. In the first case we add a new face adjacent to this edge, and in the second case we identify two edges on the boundary of the hole, this splits the hole into two components and we fill-in the finite one.
\label{fig:peeling_dual}}
\end{center}
\end{figure}

We define the perimeter of a sub-map $\e$ of $\Map$ to be the number of edges on the boundary of its simple hole, we denote it by $|\partial \e|$. If $(\e_n)_{n \ge 0}$ is a peeling exploration of $\Map$, we consider its \emph{perimeter process} given by $(|\partial \e_n|)_{n \ge 0}$; note that $|\partial \e_0| = 2$ from our convention. Budd~\cite{Budd:The_peeling_process_of_infinite_Boltzmann_planar_maps} defines in terms of $\q$ a probability measure $\nu$; we shall denote by $S = (S_n)_{n \ge 0}$ a random walk started from $2$ and with step distribution $\nu$. When $\q$ is admissible, critical and finitely supported, the support of the law $\nu$ is bounded above, furthermore this law is centered and belongs to the domain of attraction of a stable law with index $3/2$; precisely,
\begin{equation}\label{eq:queue_loi_nu}
\nu(-k) \sim C_\nu \cdot k^{-5/2}
\quad\text{as}\quad
k \to \infty, \text{ for some constant }C_\nu > 0.
\end{equation}

\begin{lem}[Budd~\cite{Budd:The_peeling_process_of_infinite_Boltzmann_planar_maps}]\label{lem:budd15}
Let $(\e_n)_{n \ge 0}$ be any peeling exploration of $\Map$. The perimeter process $(|\partial \e_n|)_{n \ge 0}$ is a Markov chain whose law does not depend on the peeling algorithm $\Alg$. More precisely, it has the same law as $S^\uparrow$, a random walk with step distribution $\nu$ conditioned to stay positive (see e.g.~\cite{Bertoin-Doney:On_conditioning_a_random_walk_to_stay_nonnegative}).
Moreover, the convergence in distribution
\[(n^{-2/3} |\partial \e_{[nt]}|)_{t \ge 0}
\cvdist
(\Upsilon^\uparrow_t)_{t \ge 0},\]
holds in the sense of Skorokhod, where $\Upsilon^\uparrow$ is a version conditioned to never hit $(-\infty, 0)$ of a $3/2$-stable spectrally negative Lévy process (see e.g.~\cite{Chaumont:Conditionings_and_path_decompositions_for_Levy_processes}).
\end{lem}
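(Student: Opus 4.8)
This lemma is due to Budd~\cite{Budd:The_peeling_process_of_infinite_Boltzmann_planar_maps}; here is how one would establish it from the spatial Markov property of Boltzmann maps. First I would record the transition probabilities of a single filled-in peeling step. Fix a sub-map $\e$ of $\Map$ whose hole has perimeter $p$. By definition of a sub-map, $\Map$ is recovered by gluing into that hole an infinite one-ended Boltzmann map with a general boundary of perimeter $p$, and the law of this unexplored region depends only on $p$. Hence peeling the edge $\Alg(\e)$ has, conditionally on $\e$, exactly two types of outcome whose probabilities can be written purely in terms of the weights $q_k$ and of the disc partition functions $W^{(j)}$ of finite Boltzmann maps with a boundary of perimeter $j$: either a new face of degree $k$ is glued, turning the perimeter into $p+k-2$, or $\Alg(\e)$ is identified with another boundary edge and, after filling in the enclosed finite region, the perimeter drops to some value $p' < p$. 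None of these probabilities involves the algorithm $\Alg$ or the part of $\Map$ outside $\e$, which already gives the first assertion: $(|\partial\e_n|)_{n \ge 0}$ is a Markov chain whose law does not depend on $\Alg$.

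To recognise this chain as $S^\uparrow$, I would introduce the step distribution $\nu$ on $\Z$ obtained from the above formulas by letting the perimeter tend to infinity, i.e.\ by replacing the partition functions by their large-perimeter equivalents; at criticality with finite support these carry the $3/2$-type singularity of the Boltzmann generating functions. A direct computation then shows that the one-step kernel of $(|\partial\e_n|)$ is the Doob $h$-transform sending $p$ to $p'$ with probability $\frac{h^\uparrow(p')}{h^\uparrow(p)}\,\nu(p'-p)$ for an explicit function $h^\uparrow$ that is $\nu$-harmonic on the positive integers and vanishes on $(-\infty,0]$; by Bertoin--Doney~\cite{Bertoin-Doney:On_conditioning_a_random_walk_to_stay_nonnegative} this is precisely the transition kernel of the $\nu$-walk conditioned to stay positive, so $(|\partial\e_n|)_{n\ge 0} \eqdist S^\uparrow$. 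Criticality of $\q$ forces $\nu$ to be centered, and since $\nu$ inherits the $3/2$-singularity of the generating functions its generic step $X$ has a negative polynomial tail $\nu(-k)\sim C_\nu k^{-5/2}$, which is~\eqref{eq:queue_loi_nu}.

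Finally I would pass to the scaling limit. From~\eqref{eq:queue_loi_nu} we get $\P(X < -k) \sim c\,k^{-3/2}$, so $\nu$ belongs to the domain of attraction of the stable law of index $3/2$; because $\q$ is finitely supported the positive jumps of $S$ are uniformly bounded, hence no positive jump survives in the limit and the scaling limit of $S$ is the $3/2$-stable spectrally negative Lévy process $\Upsilon$, with $n^{-2/3} S_{[nt]} \to \Upsilon_t$ in the Skorokhod sense (the starting value $|\partial\e_0| = 2$ being irrelevant after rescaling). It then remains to upgrade this to $(n^{-2/3}|\partial\e_{[nt]}|)_{t\ge 0} \to (\Upsilon^\uparrow_t)_{t\ge 0}$. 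For this I would invoke a standard invariance principle for random walks conditioned to stay positive: one combines the convergence of the unconditioned processes with the Doob-transform relations expressing $S^\uparrow$ from $S$ (via $h^\uparrow$, with $h^\uparrow(p)\asymp\sqrt p$) and $\Upsilon^\uparrow$ from $\Upsilon$ (via the harmonic function $x\mapsto\sqrt x$), together with the path-regularity and tightness estimates available for the $3/2$-stable process conditioned to stay positive, as in~\cite{Chaumont:Conditionings_and_path_decompositions_for_Levy_processes}.

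The genuinely laborious part is the second step: writing $\nu$, $h^\uparrow$ and the constant $C_\nu$ explicitly in terms of the disc partition functions, verifying harmonicity of $h^\uparrow$, and checking that filling in the finite hole created at each identification (case~\ref{item:def_peeling_gluing}) is compatible with the one-ended structure and keeps the perimeter process Markovian. All of this is carried out in~\cite{Budd:The_peeling_process_of_infinite_Boltzmann_planar_maps} (see also~\cite{Curien:Peccot}), so for the purposes of this paper one simply quotes it.
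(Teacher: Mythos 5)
The paper offers no proof of this lemma: it is quoted directly from Budd, exactly as you conclude at the end of your sketch. Your outline (spatial Markov property giving algorithm-independent transition probabilities in terms of disc partition functions, identification of the kernel as a Doob $h$-transform of the $\nu$-walk, the $k^{-5/2}$ tail from the critical singularity, and a Caravenna--Chaumont-type invariance principle for the conditioned walk) is a faithful account of how that cited argument goes, so there is nothing to compare beyond noting that the paper simply states the result.
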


Let us  mention that a jump of this process, say $|\partial \e_n|-|\partial \e_{n-1}| = k$ with $k \in \Z$, corresponds in the $n$-th step of the peeling exploration to the discovery of a new face of degree $k+2$ if $k \ge -1$, that is case~\ref{item:def_peeling_new_face} in Figure~\ref{fig:peeling_dual}, and it corresponds to case~\ref{item:def_peeling_gluing} in Figure~\ref{fig:peeling_dual} if $k \le -2$: the selected edge $\Alg(\e_n)$ is identified to another one on the boundary, and `swallows a bubble' of length $-(k+2)$, either to the left or to the right, equally likely.

 \subsection{Proof of Theorem~\ref{thm:peeling_avale_racine}}

We now prove Theorem~\ref{thm:peeling_avale_racine}, that is: for any peeling process, we have $\P(\root \in \partial \e_n) \le n^{-2c/3+o(1)}$ and we will compute the value $c$ in the next section. We first consider the case of the UIPT which corresponds to $\Map$ with $ \mathbf{q} = (432^{-1/4} \mathbf{1}_{k=3})_{k \geq 1}$, for which $\nu$ is supported by $\{\dots, -2,-1,0,1\}$.

\begin{proof}[Proof of Theorem~\ref{thm:peeling_avale_racine} for the UIPT]
First observe that on the event $\{\root \in \partial \e_n\}$, for each $0 \le k \le n-1$, if we identify the peel edge $\Alg(\e_k)$ to another one on the boundary and swallow a bubble longer than $\frac{1}{2} |\partial \e_k|$, then this identification must be on the correct side, so that the root-edge does not belong to this bubble. If we put $P_k = |\partial \e_k|$ and $\Delta P_k = P_{k+1}-P_k$, then it follows from the Markov property that
\[\P(\root \in \partial \e_n) \le \Es{\prod_{k=0}^{n-1} 2^{-1} \ind{\Delta P_k < - P_k/2}}
\le \Es{2^{- \#\{0 \le k \le n-1 : \Delta P_k < - P_k/2\}}}.\]
We first claim that appart from an event whose probability is $op(n)$ i.e.~decays faster than any polynomial, the process $P$ reaches values of order $n^{2/3+o(1)}$ in the first $n$ steps: From the convergence of the process $(P_n)_{n \ge 0}$ in Lemma~\ref{lem:budd15}, we see that $P_n$ is of order $n^{2/3}$; in particular, there exists $\eta \in (0,1)$ such that for every $N$ large enough, for every integer $z \in (0, N^{2/3})$, we have $\P_z(\sup_{0 \le k \le N} P_k > N^{2/3}) > \eta$. Then fix $\varepsilon > 0$; by splitting the interval $[0,n]$ in $n^{3\varepsilon/2}$ sub-intervals of length $n^{1-3\varepsilon/2}$, we deduce from the Markov property that for every $n$ large enough,
\begin{align*}
\Pr{\sup_{0 \le k \le n} P_k \le n^{2/3 - \varepsilon}}
&\le \left(\sup_{1 \le z \le n^{2/3 - \varepsilon}} \P_z\left(\sup_{0 \le k \le n^{1-3\varepsilon/2}} P_k \le n^{2/3 - \varepsilon}\right)\right)^{n^{3\varepsilon/2}}
\\
&\le (1-\eta)^{n^{3\varepsilon/2}}
\\
&= op(n).
\end{align*}
We then decompose the process into scales: for every $ i \ge 0$, let us put $\tau_i = \inf\{k \ge 0 : P_k \ge 2^{i}\}$; observe that since the jumps of $P$ are bounded by one, then $P_{\tau_i} = 2^{i}$. Using the last display and the strong Markov property we deduce that 
\[\Es{2^{- \#\{0 \le k \le n-1 : \Delta P_k < - P_k/2\}}}
\le \prod_{i=0}^{\log_2(n^{2/3 - \varepsilon})} \E_{2^{i}}\left[2^{- \#\{0 \le k < \tau_{i+1} : \Delta P_k < - P_k/2\}}\right] + op(n).\]
If $\theta(z) = \inf\{t \ge 0 : \Upsilon^\uparrow_t \ge z\}$ is the first passage time above level $z > 0$, the convergence of the process $(P_n)_{n \ge 0}$ in Lemma~\ref{lem:budd15} then implies that 
\begin{eqnarray}\E_{2^{i}}\left[2^{- \#\{0 \le k < \tau_{i+1} : \Delta P_k < - P_k/2\}}\right] &\cv[i]&
\E_1\left[2^{- \#\{0 \le t \le \theta(2) : \Delta \Upsilon^\uparrow_t < - \Upsilon^\uparrow_{t-}/2\}}\right] \nonumber \\ & \eqqcolon& 
 \ex^{-c \ln 2},  \label{eq:defc} \end{eqnarray} where the last line defines the constant $c$. Putting back the pieces together, we deduce by Ces\`{a}ro summation that for $n$ large enough we have
\[\P(\root \in \partial \e_n)
\le \ex^{-c \ln(2) \log_2(n^{2/3 - \varepsilon}) + o(1)} = n^{-2c/3 + o(1)}\]
and the proof in the case of the UIPT is complete.
\end{proof}

For more general maps, the perimeter process does not increase only by one, so it is not true that $|\e_{\tau_i}| = 2^{i}$ a.s.~and the scales are not independent. Nonetheless, if the degrees are uniformly bounded, say by $D < \infty$, then $|\e_{\tau_i}| \in \{2^{i}, \dots, 2^{i}+D\}$ and the scales do become independent at the limit: one can still use the strong Markov property to get the bound
\[\Es{2^{- \#\{0 \le k \le n-1 : \Delta P_k < - P_k/2\}}}
\le \prod_{i=1}^{\log_2(n^{2/3 - \varepsilon})} \sup_{2^{i} \le z\le 2^{i}+D} \E_z\left[2^{- \#\{0 \le k < \tau_{i+1} : \Delta P_k < - P_k/2\}}\right],\]
and $\sup_{2^{i} \le z\le 2^{i}+D} \E_z[2^{- \#\{0 \le k < \tau_{i+1} : \Delta P_k < - P_k/2\}}]$ converges as previously to $ \mathrm{e}^{-c \ln 2}$.

\medskip
\noindent \textbf{Remark:} The factor $2/3$ coming from the lower bound $\sup_{0 \le k \le n} P_k \ge n^{2/3 - \varepsilon}$ with high probability cannot be improved. Indeed it is easy to check in our case of increments bounded above that $\P(\sup_{0 \le k \le n} P_k \le n^{2/3 + \varepsilon})=op(n)$. This shows that the exponent in Theorem~\ref{thm:peeling_avale_racine} is optimal for the worst algorithm which always peels at the opposite of the root-edge on $\partial \e_n$.

\subsection{\texorpdfstring%
	{Calculation of $c$ via Lamperti representation}%
	{Calculation of c via Lamperti representation}}

We now characterize the value $c$ defined in~\eqref{eq:defc} using the Lamperti representation of $\Upsilon^\uparrow$. We start with a simple calculation on general L\'evy processes. \medskip

Let $\xi=(\xi_{t})_{ t \geq 0}$ be a L\'evy processes with drift $a \in \R$, no Brownian part, and L\'evy measure $\Pi(\d y)$ which we assume possesses no atom and is supported on $\R_-$ so $\xi$ makes only negative jumps (see e.g. Bertoin's book~\cite[Chapter VII]{Bertoin:Levy_processes}); assume also that $\xi$ does not drift towards $-\infty$. We can consider the Laplace transform of $\xi$ and define its characteristic exponent via the L\'evy--Khintchine representation as 
\[\Es{\ex^{\lambda \xi_t}} = \ex^{t\psi(\lambda)},
\quad\text{for all }
t, \lambda \geq 0,
\enskip\text{where} \enskip
\psi(\lambda) = a \lambda + \int_{\R_-} \big( \ex^{\lambda x}-1- \lambda x \ind{|x|<1}\big) \Pi(\d x).\]
Recall that $\theta(z) = \inf\{t \ge 0 : \xi_t \ge z\}$ is the first passage time above level $z > 0$; since $\xi$ does not drift towards $-\infty$ and does not make positive jumps, we have $\theta(z) < \infty$ and $\xi_{\theta(z)}=z$ almost surely.

\begin{lem}\label{lem:controle_sauts_Levy} Let $F : \R_- \to \R_-$ be a function which is identically $0$ in a neighborhood of $0$. If we note $\Delta \xi_{t} = \xi_t - \xi_{t-} \le 0$ for the value of the jump at time $t$, then for every $z \geq 0$ we have 
\[\Es{\exp\left( \sum_{ t \leq \theta(z) } F( \Delta \xi_{t})\right)} = \exp(- c_{F} \cdot z),\]
where $c_{F}>0$ satisfies
\[\psi(c_{F}) =  \int_{\R_-} \left( 1-\ex^{F(x)}\right) \ex^{c_{F} x} \Pi(\d x).\]
\end{lem}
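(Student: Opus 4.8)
The statement is an exponential-functional identity for a spectrally negative Lévy process stopped at a first passage time, and the natural tool is the exponential martingale combined with optional stopping. The plan is as follows. Fix $z\geq 0$ and, for a parameter $q>0$ to be chosen, consider the process
\[
M_t \;=\; \exp\!\left(q\,\xi_t \;+\; \sum_{s\leq t} F(\Delta\xi_s) \;-\; t\,\Phi(q)\right),
\]
where $\Phi(q)$ is the (yet to be identified) rate that makes $M$ a martingale. To find $\Phi(q)$ and show $M$ is a martingale, I would compute $\Es{M_t}$ directly using the Lévy--It\^o decomposition: the jump part contributes, via the exponential formula for Poisson random measures, a factor $\exp\!\big(t\int_{\R_-}(\ex^{qx+F(x)}-1-qx\ind{|x|<1})\,\Pi(\d x)\big)$, while the drift contributes $\ex^{taq}$. (There is no Brownian part.) Matching against $\ex^{t\Phi(q)}$ gives
\[
\Phi(q) \;=\; aq + \int_{\R_-}\big(\ex^{qx+F(x)}-1-qx\ind{|x|<1}\big)\,\Pi(\d x)
\;=\; \psi(q) \;-\; \int_{\R_-}\big(1-\ex^{F(x)}\big)\ex^{qx}\,\Pi(\d x),
\]
where in the second equality I add and subtract $\ex^{qx}$ inside the integral and recognise $\psi(q)$. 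Thus $M$ is a genuine (mean-one) martingale for every $q>0$.

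Next I would apply optional stopping at $\theta(z)$. Since $\xi$ has no positive jumps and does not drift to $-\infty$, we have $\theta(z)<\infty$ and $\xi_{\theta(z)}=z$ almost surely (as recalled in the excerpt), so at the stopping time the first term in the exponent is exactly $qz$. Choosing $q=c_F$ to be a root of $\Phi(c_F)=0$, the time-dependent term disappears and we are left with
\[
M_{\theta(z)} \;=\; \exp\!\left(c_F z + \sum_{s\leq\theta(z)}F(\Delta\xi_s)\right),
\]
so that $\Es{M_{\theta(z)}}=1$ would give exactly $\Es{\exp(\sum_{s\le\theta(z)}F(\Delta\xi_s))}=\ex^{-c_F z}$, which is the claim. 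The equation $\Phi(c_F)=0$ is precisely $\psi(c_F)=\int_{\R_-}(1-\ex^{F(x)})\ex^{c_F x}\,\Pi(\d x)$. One still needs to check that such a root $c_F>0$ exists and is positive: here I would use that $F\le 0$ so the right-hand side $\int_{\R_-}(1-\ex^{F(x)})\ex^{qx}\,\Pi(\d x)\ge 0$ is finite and nonincreasing in $q$ (bounded by $\int(1-\ex^{F(x)})\Pi(\d x)<\infty$ since $F$ vanishes near $0$), while $\psi$ is convex, $\psi(0)=0$, and $\psi'(0^+)\le 0$ because $\xi$ does not drift to $-\infty$; since $\psi(q)\to\infty$ as $q\to\infty$ (or at least eventually exceeds the bounded right-hand side), a crossing point $c_F>0$ exists. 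Strict positivity follows from comparing derivatives at $0$: at $q=0$ the right-hand side is strictly positive whereas $\psi(0)=0$, so $c_F>0$.

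\textbf{Main obstacle.} The delicate point is the justification of optional stopping: $\theta(z)$ need not be bounded, so I cannot invoke the bounded optional stopping theorem directly. The standard fix is to stop at $\theta(z)\wedge t$, where $M_{\theta(z)\wedge t}$ is bounded (the exponent is at most $c_F z$ since $\xi$ stays below $z$ before $\theta(z)$, and $F\le 0$, and the $-t\Phi(c_F)=0$ term vanishes), obtaining $\Es{M_{\theta(z)\wedge t}}=1$ for all $t$, and then pass to the limit $t\to\infty$ by dominated convergence using $M_{\theta(z)\wedge t}\le \ex^{c_F z}$ and $M_{\theta(z)\wedge t}\to M_{\theta(z)}$ a.s.\ (using $\theta(z)<\infty$ a.s.). A minor additional point is to confirm that the no-atom assumption on $\Pi$ together with $F$ vanishing near $0$ ensures the sum $\sum_{s\le t}F(\Delta\xi_s)$ is a.s.\ finite (only finitely many jumps of size bounded away from $0$ on compact time intervals) and that the exponential formula applies without convergence issues; both are routine given the hypotheses. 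The uniqueness of $c_F$, if desired, follows from strict monotonicity of $\Phi$ past its unique minimum, but uniqueness is not needed for the identity.
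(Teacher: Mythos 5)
Your proposal is correct and follows essentially the same route as the paper: the exponential martingale $\exp\bigl(c\,\xi_t+\sum_{s\le t}F(\Delta\xi_s)\bigr)$, normalised to mean one via the L\'evy--It\=o decomposition and the exponential formula (which forces exactly the equation $\psi(c_F)=\int_{\R_-}(1-\ex^{F(x)})\ex^{c_F x}\,\Pi(\d x)$), followed by optional stopping at $\theta(z)\wedge t$ using the bound $\ex^{c_F z}$ and $\xi_{\theta(z)}=z$. The only slip is in your added existence argument (which the paper omits): not drifting to $-\infty$ gives $\psi'(0^+)=\E[\xi_1]\ge 0$, not $\le 0$, but this sign actually makes the crossing argument with the nonincreasing right-hand side even more immediate.
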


\begin{proof}
For $c>0$ we consider the positive càdlàg process 
\[Z_{c}(t)=\exp\left(c \xi_{t} + \sum_{s \le t} F( \Delta \xi_s)\right).\]
Clearly $\log Z_{c}(t)$ has stationary and independent increments. We will choose $c$ so that $Z_{c}$ is a martingale and for this it is sufficient to tune $c$ so that $ \E[Z_{c}(t)]=1$. Appealing to the Lévy--It\=o decomposition and the exponential formula for Poisson random measure (see e.g.~\cite{Bertoin:Levy_processes}, Chapter 0.5 and Chapter 1, Theorem 1 and its proof), and using the L\'evy--Khintchine representation of $\psi(\cdot)$, we get
\begin{align*}
\Es{\exp\left(c \xi_{t} + \sum_{s \le t} F( \Delta \xi_s)\right)}
&= \exp\left(t\left(ac + \int_{\R_-} (\ex^{c x + F(x)} - 1 - cx \ind{|x| < 1}) \Pi(\d x)\right)\right)
\\
&= \exp\left(t\left(\psi(c) + \int_{\R_-} (\ex^{F(x)} - 1) \ex^{c x} \Pi(\d x)\right)\right).
\end{align*}
Hence if we pick $c=c_{F}$ satisfying the assumption of the lemma then $Z_{c}$ is a positive martingale. Furthermore by our assumptions $Z_{c}(t \wedge \theta(z))$ is bounded by $\ex^{cz}$ so we can apply the optional sampling theorem and get the statement of the lemma (using also that $\xi_{\theta(z)} = z$).
\end{proof}

Let us apply this result to a well-chosen Lévy process. Let $(\Upsilon^\uparrow_{t})_{t \geq 0}$ be the $3/2$-stable Lévy process with no negative jumps conditioned to stay positive and started from $\Upsilon^\uparrow_{0}=1$. The constant $c$ defined in~\eqref{eq:defc} satisfies  $\ex^{- c \ln 2}= \E[2^{- \mathcal{N}}]$ where 
\[\mathcal{N} = \# \left\{\text{jumps before } \theta(2), \text{such that } |\Delta \Upsilon^\uparrow_{t}| > \frac{\Upsilon^\uparrow_{t-}}{2}\right\}.\]
Note that, as mentioned in the introduction of this section, this quantity is scale-invariant so it does not depend on the choice of the normalization of $\Upsilon^\uparrow$. The process $\Upsilon^\uparrow$ is a so-called \emph{positive self-similar Markov process}, and by the Lamperti representation it can be represented as the (time-changed) exponential of a L\'evy process: for every $t \ge 0$,
\[\Upsilon^\uparrow_{t} = \exp(\xi_{\tau_{t}}),\]
where the random time change $\tau_{t}$ will not be relevant in what follows. In particular, through this representation, for every $u \in(0,1)$, the random variable $u^{\mathcal{N}}$ is transformed into the variable
\[\exp\left( \sum_{t \leq \theta (\ln 2)} \ln u \cdot \ind{ \Delta \xi_{t} < - \ln 2}\right)\]
for the associated L\'evy process $\xi$. This is of the form of the previous lemma with $F : x \mapsto \ln u \cdot \ind{x < - \ln 2}$. Caballero and Chaumont~\cite[Corollary 2]{Caballero-Chaumont:Conditioned_stable_Levy_processes_and_the_Lamperti_representation} have computed explicitly the characteristics of $\xi$; in particular $\xi$ drifts towards $+\infty$ and has a L\'evy measure given by 
\[\Pi(\d y)= \frac{\ex^{3y/2}}{(1- \ex^{y})^{5/2}} \ind{y <0} \d y.\]
Furthermore in our spectrally negative case, its Laplace exponent has been computed in Chaumont, Kyprianou and Pardo~\cite[just before Lemma 2]{Chaumont-Kyprianou-Pardo:Some_explicit_identities_associated_with_positive_self_similar_Markov_processes}: for every $\lambda \ge 0$,
\[\psi(\lambda) = \E[\xi_{1}] \frac{\Gamma(\lambda + 3/2)}{\Gamma(\lambda) \Gamma(3/2)}.\]
To compute the value of $\E[\xi_{1}]$ we can use the asymptotic behaviour as $\lambda \to \infty$:
\[\frac{\E[\xi_{1}]}{\Gamma(3/2)} \lambda^{3/2}
\underset{\lambda \to \infty}{\sim}
\E[\xi_{1}] \frac{\Gamma(\lambda + \frac{3}{2})}{\Gamma(3/2) \Gamma(\lambda)}
= \psi(\lambda)
= a \lambda + \int_{-\infty}^{0}  \frac{\d y \ex^{3y/2}}{(1- \ex^{y})^{5/2}}\left(\ex^{\lambda y}-1-\lambda y \ind{|y|<1}\right).\]
 It is easy to see that in the right-hand side of the last display, when $\lambda \to \infty$, the only contribution which can be of the order of $\lambda ^{3/2}$ must appear in the vicinity of $0$, hence we can replace the L\'evy measure by its equivalent as $y \to 0$ and the last display becomes
\[\frac{\E[\xi_{1}]}{\Gamma(3/2)} \lambda^{3/2}
\underset{\lambda \to \infty}{\sim}
\int_{-\infty}^{0}  \frac{\d y}{|y|^{5/2}}\left( \ex^{\lambda y}-1-\lambda y \ind{|y|<1}\right)
\underset{\lambda \to \infty}{\sim}
\frac{4\sqrt{\pi}}{3}\lambda^{3/2},\]
where the last asymptotical equivalence is a standard calculation. Hence we deduce that 
$\E[\xi_{1}] = \frac{2\pi}{3}.$ We can thus apply the last lemma and conclude that for every $u \in (0,1)$, 
\[\Es{u^{\mathcal{N}}} = \Es{\exp\left( \sum_{t \leq \theta (\ln 2)} \ln u \cdot \ind{ \Delta \xi_{t} < - \ln 2}\right)} = \exp(- c_u \ln 2),\]
where $c_u$ satisfies
\[\frac{2\pi}{3} \frac{\Gamma(c_u + 3/2)}{\Gamma(c_u) \Gamma(3/2)}
= \psi(c_u)
= \int_{-\infty}^{\ln u} (1-u) \ex^{c_u y} \frac{\ex^{3y/2}}{(1- \ex^{y})^{5/2}} \d y.\]
Performing the change of variable $x = \ex^{y}$ in the second integral and using the representation $\frac{\Gamma(a)\Gamma(b)}{\Gamma(a+b)} = \int_{0}^{1} x^{a-1}(1-x)^{b-1} \d x$ gives us the equivalent definition of $c_u$ by an integral equation:
\[\frac{2\pi}{3(1-u)}
= \int_{0}^{1} x^{c_u-1}(1-x)^{1/2} \d x \cdot \int_0^u x^{c_u+1/2} (1- x)^{-5/2} \d x.\]
The constant $c$ defined in~\eqref{eq:defc} and which appears in Theorem~\ref{thm:peeling_avale_racine} corresponds to $u = 1/2$, and it can easily be evaluated numerically to find that $c = c_{1/2} \approx 0.1283123514178324542...$

\section{Pioneer edges and sub-diffusivity}
\label{sec:sous_diff}

In this section we suppose that $ \mathbf{q} = ( \frac{1}{12} \mathbf{1}_{k=4})_{k \geq 1}$ so that $ \mathfrak{M}_{\infty}$ is the UIPQ although that there is little doubt that everything can be extended to the more general context of  generic Boltzmann maps. We first recast the main result of \cite{Benjamini-Curien:Simple_random_walk_on_the_uniform_infinite_planar_quadrangulation_subdiffusivity_via_pioneer_points} in the case of Budd's peeling process. We denote by $\rho$ the origin of the root-edge. 

\begin{prop}[Depth of a peeling process]
\label{prop:hauteur_peeling}
Let  $(\e_n)_{n \ge 0}$ be any (filled-in Markovian) peeling process of the UIPQ with origin $\rho$, then the sequence
\[\left(n^{-1/3} \max\{\dgr( \rho, x) ; x \in \e_n\}\right)_{n \ge 1} \qquad \mbox{ is tight}.\]
\end{prop}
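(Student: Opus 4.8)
The statement is the tightness of $\big(n^{-1/3}\mathcal R_n\big)_{n\ge 1}$ where $\mathcal R_n := \max\{\dgr(\rho,x) : x\in\e_n\}$, so it suffices to prove that $\sup_{n\ge 1}\P(\mathcal R_n\ge \lambda n^{1/3})\to 0$ as $\lambda\to\infty$. I would run an entirely algorithm-free argument based on two ingredients. First, an \emph{algorithm-free volume bound}: $\#\mathsf{Faces}(\e_n)$ is $O_{\P}(n^{4/3})$, with polynomial control on its upper tail, i.e.\ $\sup_n \P(\#\mathsf{Faces}(\e_n)\ge \mu n^{4/3}) = O(\mu^{-\beta})$ for some $\beta>0$. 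Second, a \emph{geometric input on the UIPQ}: there is a constant $c>0$ such that, with probability tending to $1$ as $s\to\infty$, every connected sub-map $\e$ of $\Map_\infty$ containing $\rho$ and reaching graph distance at least $s$ from $\rho$ has at least $c\,s^4$ faces, \emph{uniformly over all such $\e$}. Granting both: on $\{\mathcal R_n\ge r\}$ the explored map $\e_n$ is such a sub-map reaching distance $\ge r$, so $\P(\mathcal R_n\ge r)\le \P(\#\mathsf{Faces}(\e_n)\ge c\,r^4)+o_{r\to\infty}(1)$; choosing $r=\lambda n^{1/3}$ and using the volume bound gives $\P(\mathcal R_n\ge \lambda n^{1/3})\le O((c\lambda^4)^{-\beta})+o_{\lambda\to\infty}(1)\to 0$ uniformly in $n$ (since $\lambda n^{1/3}\ge \lambda$), which is exactly the claimed tightness. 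It is crucial that both ingredients carry no logarithmic corrections, otherwise one would only recover the weaker $\mathcal R_n\le n^{1/3+o(1)}$.

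For the volume bound: each of the $n$ peeling steps either reveals a single new face (case~\ref{item:def_peeling_new_face}) or identifies two boundary edges and fills in a finite bubble (case~\ref{item:def_peeling_gluing}), so $\#\mathsf{Faces}(\e_n)\le n+\sum_j \#\mathsf{Faces}(B_j)$, the sum ranging over the filled-in bubbles $B_j$. By the spatial Markov property of $\Map_\infty$, conditionally on its perimeter $\ell_j$ the bubble $B_j$ is a finite $\q$-Boltzmann map with boundary $\ell_j$, hence has $O(\ell_j^2)$ faces in expectation; thus $\Esc{\sum_j\#\mathsf{Faces}(B_j)}{(\ell_j)_j}=O(\sum_j\ell_j^2)$. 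Now $\ell_j+2$ are the downward jumps of the perimeter process $(|\partial\e_k|)_{0\le k\le n}$, whose law is algorithm-free by Lemma~\ref{lem:budd15}; by \eqref{eq:queue_loi_nu} their common tail is of order $x^{-3/2}$, so the variables $\ell_j^2$ have tail index $3/4<1$, and the classical behaviour of sums of such variables (the conditioning affects only the small jumps and is harmless here) gives $\sum_j\ell_j^2=O_{\P}(n^{4/3})$ with $\P(\sum_j\ell_j^2\ge \mu n^{4/3})=O(\mu^{-3/4})$. Conditioning on the perimeter process and applying Markov's inequality then produces the announced $O_{\P}(n^{4/3})$ bound with a polynomial tail, and with no logarithmic factors since the upper jumps of the perimeter process are bounded (the face-degrees being bounded).

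The main difficulty is the second ingredient, which I expect to absorb most of the work. Since $\e_n$ depends on the peeling algorithm in an essential way it cannot be treated as a single Boltzmann-distributed object, and a union bound over the exponentially many candidate sub-maps is hopeless, so the estimate has to hold simultaneously for \emph{all} sub-maps of $\Map_\infty$ reaching a given distance. Its substance is two-fold: (a) the classical volume lower bound for metric balls, $\#\mathsf{Faces}(\Ball_s(\rho))\ge c\,s^4$ with probability $\to 1$, which takes care of the ``bulky'' case; and (b) the exclusion of \emph{thin corridors} — a connected set reaching distance $s$ with far fewer than $s^4$ faces would contain a long narrow tube in $\Map_\infty$, and such configurations are scarce, with probability vanishing as $s\to\infty$, by a renormalisation argument over dyadic scales together with the perimeter and volume estimates for the hulls of $\Map_\infty$ (as in~\cite{Curien-Le_Gall:Scaling_limits_for_the_peeling_process_on_random_maps} and the Brownian-map literature). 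Point (b) is the delicate one and is genuinely needed: a thin tentacle built by always peeling at the tip of the explored region has perimeter only of order $s$, perfectly compatible with the typical size $n^{2/3}$ of the perimeter process on the relevant range $n^{1/3}\ll s\ll n^{2/3}$, so it is the geometry and not the perimeter process that must rule it out. An essentially equivalent and perhaps more hands-on route, avoiding an abstract formulation of (b), is to track the boundary reach $L_k:=\max\{\dgr(\rho,y):y\in\partial\e_k\}$: it can only grow at a case-\ref{item:def_peeling_new_face} step, and then by at most $1$; hence on $\{\mathcal R_n\ge r\}$ either some swallowed bubble has diameter at least $r/2$ — forcing its perimeter, hence $\max_{0\le k\le n}|\partial\e_k|$, to be of order at least $r^2$, impossible when $r\gg n^{1/3}$ by Lemma~\ref{lem:budd15} — or $\partial\e_k$ is, for some $k\le n$, a cycle of length $O(n^{2/3})$ separating $\rho$ from infinity yet reaching distance $r/2$, which the same UIPQ geometry forbids. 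Once (a) and (b) are in place, combining the three small-probability events and letting $\lambda\to\infty$ finishes the proof, and in fact yields $\mathcal R_n=O_{\P}(n^{1/3})$.
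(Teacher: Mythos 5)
Your second ingredient, as stated, is false, and it is the load-bearing part of your first route. A connected sub-map of $\Map$ containing $\rho$ and reaching distance $s$ with far fewer than $s^4$ faces \emph{always} exists: take the faces incident to a geodesic from $\rho$ to a vertex at distance $s$ (filled in), which reaches distance $s$ with only $s^{1+o(1)}$ faces with high probability. So thin corridors are not ``scarce'' in $\Map$; what is true is that a \emph{peeling process} cannot discover one in only $n$ steps, and that statement cannot be uniform over all sub-maps --- it must use the law of the explored region. Your quantifier ``uniformly over all such $\e$'' discards precisely the spatial Markov property, which is the mechanism the paper's proof relies on: conditionally on $\e_n$, the unexplored part $\Map\setminus\e_n$ is a UIPQ with a boundary of perimeter $|\partial\e_n|$, and by Curien--Miermont its \emph{aperture} (maximal distance between boundary vertices) is $\lesssim\sqrt{|\partial\e_n|}\asymp n^{1/3}$ by Lemma~\ref{lem:budd15}. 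Combined with $D_n^-\lesssim n^{1/3}$ (which does follow from your volume bound $|\e_n|=O_\P(n^{4/3})$ together with $\hBall(\Map,D_n^--1)\subset\e_n$ and $|\hBall(\Map,r)|\asymp r^4$) and with the absence of long tentacles in hulls, this pins $D_n^+$, hence $\max_{x\in\e_n}\dgr(\rho,x)$, at $O_\P(n^{1/3})$. Note that the aperture bound applied to the complement is exactly what rules out the thin tentacle of length $s\asymp n^{2/3}$ you worry about: such a tentacle's complement would be a Boltzmann quadrangulation of perimeter $\asymp s$ with aperture $\ge s\gg\sqrt{s}$, an atypical event for that conditional law --- the tentacle exists in the map, but not as the explored region of an $n$-step peeling.

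Your ``more hands-on route'' is in fact the paper's argument in disguise, but the step you label ``which the same UIPQ geometry forbids'' is the entire content of the proof and is left unproved; likewise the claim that a swallowed bubble of diameter $\ge r/2$ has perimeter $\gtrsim r^2$ is again the aperture estimate (now for finite Boltzmann quadrangulations with a boundary), not a deterministic fact. To repair the proposal, replace ingredient~2 by: (i) the volume lower bound for hulls of balls to control $D_n^-$; (ii) the conditional aperture bound $D_n^+-D_n^-\lesssim\sqrt{|\partial\e_n|}$ via the spatial Markov property and~\cite{Curien-Miermont:Uniform_infinite_planar_quadrangulations_with_a_boundary}; (iii) the no-long-tentacles property of hulls to pass from $\partial\e_n\subset\hBall(\Map,D_n^++1)$ to all of $\e_n$. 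Your first ingredient (the $O_\P(n^{4/3})$ volume bound with polynomial tails, derived from the $x^{-3/2}$ tails of the downward jumps) is correct and is the same input the paper takes from Budd.
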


\begin{proof}[Sketch of proof.] We  follow~\cite[Section 4.2]{Benjamini-Curien:Simple_random_walk_on_the_uniform_infinite_planar_quadrangulation_subdiffusivity_via_pioneer_points}. Throughout the proof, if $(Y_n)_{n \ge 0}$ and $(Z_n)_{n \ge 0}$ are two positive processes, we shall write $Y_n \lesssim Z_n$ if $(Y_{n}/Z_{n})_{n \geq 0}$ is tight. 
Also $Y_n \gtrsim Z_n$ if $Z_n \lesssim Y_n$, and finally $Y_n \asymp Z_n$ if we have both $Y_n \lesssim Z_n$ and $Y_n \gtrsim Z_n$. \\
For every $n \ge 0$, let $D_n^-$ and $D_n^+$ be respectively the minimal and the maximal distance to the origin $\rho$ of a vertex in $\partial \e_n$ so that   \begin{eqnarray} \label{eq:inclus} \hBall(\Map, D_n^- -1) \subset \e_n \subset \hBall(\Map, D_n^+ +1),  \end{eqnarray} where $\hBall(\Map, r)$ denotes the hull of the ball in $\Map$ of radius $r$ centered at the origin of $\Map$. Using $|\e_n| \asymp n^{4/3}$  (see~\cite{Budd:The_peeling_process_of_infinite_Boltzmann_planar_maps}) and $|\hBall(\Map, r)| \asymp r^4$ (see ~\cite{Curien-Le_Gall:Scaling_limits_for_the_peeling_process_on_random_maps}) the two inclusions of~\eqref{eq:inclus} give $D_{n}^{-} \lesssim  n^{1/3}$ and $D_{n}^{+} \gtrsim n^{1/3}$.

We will first show that $D_n^+ \lesssim n^{1/3}$ or simply $D_{n}^{+}-D_{n}^{-} \lesssim n^{1/3}$ since we already know $D_{n}^{-} \lesssim n^{1/3}$. But as in~\cite[Section 4.2]{Benjamini-Curien:Simple_random_walk_on_the_uniform_infinite_planar_quadrangulation_subdiffusivity_via_pioneer_points} we can bound $D_{n}^{+}-D_{n}^{-}$ by the aperture of $ \mathfrak{M}_{\infty} \backslash \overline{e}_{n}$ which is the maximal graph distance between points on its (general) boundary. Since conditionally on $ \overline{\mathfrak{e}}_{n}$ the later is a UIPQ with a boundary of perimeter $| \partial \overline{ \mathfrak{e}}_{n}|$ we can use~\cite[Section 3.2]{Curien-Miermont:Uniform_infinite_planar_quadrangulations_with_a_boundary}  to deduce that its aperture is $\lesssim  \sqrt{| \partial \overline{ \mathfrak{e}}_{n}|}$. Finally since $ | \partial \overline{ \mathfrak{e}}_{n}| \asymp n^{2/3}$ by~\cite{Budd:The_peeling_process_of_infinite_Boltzmann_planar_maps} we indeed deduce that $D_{n}^{+}-D_{n}^{-} \lesssim n^{1/3}$. The end of the proof is then the same as~\cite[Section 4.2]{Benjamini-Curien:Simple_random_walk_on_the_uniform_infinite_planar_quadrangulation_subdiffusivity_via_pioneer_points} and use the fact that the hull of the ball of radius $r$ in the UIPQ has no long tentacles of length $\gg r$. \end{proof}

To prove Proposition~\ref{cor:sous-diff} we shall use a particular peeling algorithm, introduced in~\cite{Benjamini-Curien:Simple_random_walk_on_the_uniform_infinite_planar_quadrangulation_subdiffusivity_via_pioneer_points} that we adapt to the peeling of~\cite{Budd:The_peeling_process_of_infinite_Boltzmann_planar_maps} and the walk on the faces, see also~\cite[Chapter~8]{Curien:Peccot}. We let our walk start from the root-face on the right of the root-edge of $\Map$. Every time the walk wants to cross an edge on the boundary of the explored region, we peel this edge. We then define the pioneer edges of the walk as the peel edges. We let $\pi_{n}$ denote the number of pioneer edges amongst the first $n$ steps of the walk.

\begin{figure}[!ht]\centering
\includegraphics[height=10\baselineskip]{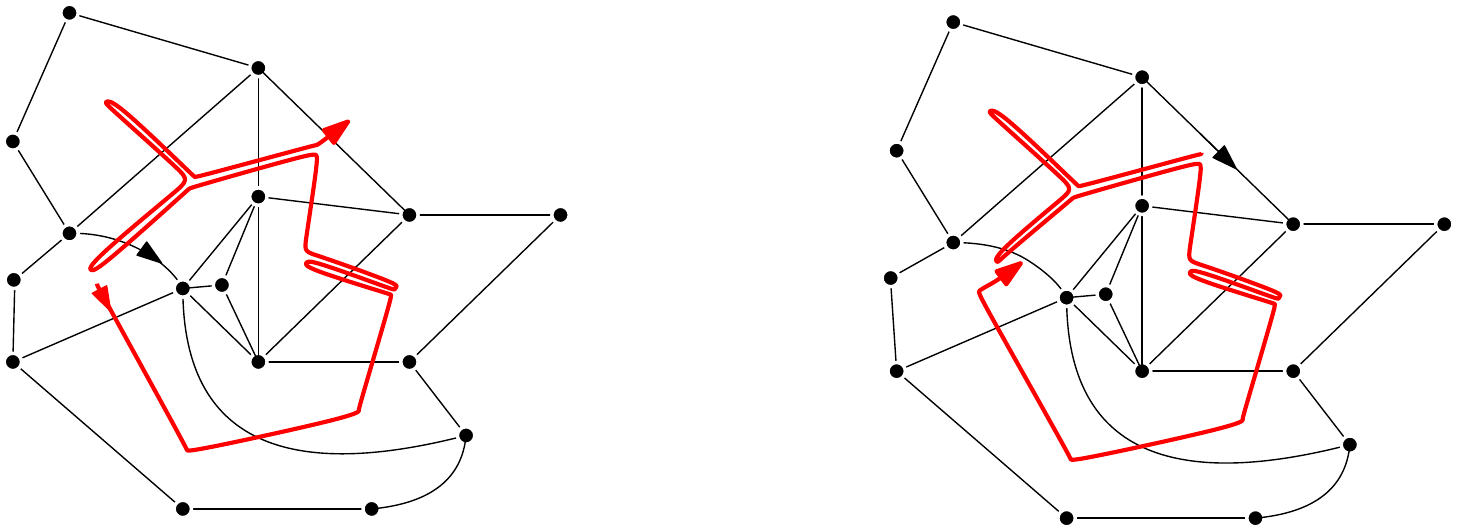}
\caption{Left: A simple random walk on the dual map, started at the face on the right-hand side of the root-edge, and about to reach a pioneer edge. Right: the same scenery in reverse time where the root-edge is not swallowed after $n$ steps of walk.}
\label{fig:marche_dual}
\end{figure}

\begin{proof}[Proof of Proposition~\ref{cor:sous-diff}] We use the reversibility trick~\cite[Lemma 12]{Curien:PSHT}. 
Let $(\vec{E}_i)_{i \ge 1}$ be the edges visited by the walk. Fix $n \geq 1$, since the UIPQ is a stationary and reversible random graph (see~\cite{Benjamin-Curien:Ergodic_theory_on_stationary_random_graphs}) we can reverse the first $n$ steps of the path and deduce that the probability that $\vec{E}_n$ is pioneer equals the probability that (one side of) the root-edge $\root$ of $\Map$ is still on the boundary of $\e_{\pi_n}$ after performing $n$ steps of random walk, see Fig.~\ref{fig:marche_dual}. We then split this probability as follows:
\[\P(\vec{E}_n \text{ is pioneer})
= \Pr{\root \in \partial \e_{\pi_n}}
\le \Pr{\pi_n \leq n^{\alpha}} + \Pr{\root \in \partial \e_{n^\alpha}},\] for some small $\alpha >0$ that we tune later on.
According to Theorem~\ref{thm:peeling_avale_racine}, the second term is bounded by $n^{-2 c \alpha/3 +o(1)}$. Concerning the first term, if $\pi_{n} \leq n^{\alpha}$ then the walk has been confined in $\hBall(\Map, 4 n^{\alpha})$ for the first $n$ steps. However we claim that the probability that the walk stays confined in a given finite region $ G$ for a long time is bounded as follows: 
\[\Pr{X_i \in G \text{ for all } 1 \le i \le n} \le \exp( - n C / |G|^2),\] for some constant $C>0$. 
This can be shown using the fact that the expected cover time of any finite planar graph $G$ is less than $6 |G|^2$, see~\cite{Jonasson-Schramm:On_the_cover_time_of_planar_graphs} which also discusses lower bounds. In our context we thus deduce that
\begin{align*}
\P(\vec{E}_{n} \mbox{ is pioneer}) &\le n^{- 2\alpha c/3 + o(1)} + \P(|\hBall(\Map, 4n^{\alpha})| \geq n^{1/2 + o(1)}) + op(n)
\\
&\le n^{- 2\alpha c/3 + o(1)} + n^{-1/2 + o(1)} \cdot \E[|\hBall(\Map, 4n^{\alpha})|]
\\
&\le n^{- 2\alpha c/3 + o(1)} +  n^{4 \alpha -1/2 +o(1)},
\end{align*}
where the second inequality is an application of the Markov inequality, and the third one may be found in~\cite[Proposition 14]{Le_Gall-Lehericy:Separating_cycles_and_isoperimetric_inequalities_in_the_UIPQ}. 
We can then pick $\alpha = (8+4c/3)^{-1}$ so that the last display is  smaller than $n^{-\gamma+o(1)}$ for $\gamma = c/(12+2c)$. By summing over $n$ this implies the desired estimate $\E[\pi_{n}] = O( n^{1-\gamma})$. The end of the proof it then straightforward. Let us perform $n$ steps of random walk on the dual of the UIPQ. By the above and Markov inequality we deduce that $ \pi_{n} \lesssim n^{1-\gamma}$, and so we have performed $\lesssim n^{1-\gamma}$ peeling steps. By Proposition~\ref{prop:hauteur_peeling} we have thus remained within distance $ \lesssim n^{ (1-\gamma)/3}$ from the origin of the map. \end{proof}


{\small
\linespread{1}\selectfont 

}

\end{document}